\newcommand{\C}{{\mathbb C}}
\newcommand{\N}{{\mathbb N}}
\newcommand{\R}{{\mathbb R}}
\newcommand{\abs}[2][\empty]{\ifx#1\empty\left|#2\right|%
\else#1\vert #2 #1\vert\fi}
\newcommand{\Cnt}[1][]{{\cal C}^{#1}}
\newcommand{\conv}{\star}
\newcommand{\csub}{\subset\subset}
\newcommand{\eps}{\varepsilon}
\newcommand{\norm}[2][\empty]{\ifx#1\empty\left\Vert#2\right\Vert%
\else#1\Vert #2 #1\Vert\fi}
\newcommand{\pseudonorm}[2][\empty]{\ifx#2.{\mathcal P}\else\ifx#1\empty{\mathcal P}(#2)%
\else{\mathcal P}#1( #2 #1)\fi\fi}
\newcommand{\restr}[2]{{#1}_{|#2}}
\newcommand{\supp}{\mathop{\mathrm{supp}}}
\newcommand{\test}{\mathcal D}
\newcommand{\val}{\mathop\mathrm v}
\newcommand{\Gen}{{\mathcal G}}
\newcommand{\GenC}{\widetilde\C}
\newcommand{\EMod}{{{\mathcal E}_M}}
\newcommand{\Null}{{\mathcal N}}
\newcommand{\sharpnorm}[2][\empty]{\abs[#1]{#2}_{\mathrm e}}
\newcommand{\GenLin}[1]{\Gen_{\mathcal L_{#1}}}
\newtheorem{thm}{Theorem}[section]
\newtheorem{lemma}[thm]{Lemma}
\newtheorem{prop}[thm]{Proposition}
\newtheorem{cor}[thm]{Corollary}
\theoremstyle{definition}
\newtheorem*{ack}{Acknowledgment}
\begin{document}
\title{Topological properties of regular generalized function algebras}
\author{H.~Vernaeve\footnote{Dept.\ Of Mathematics, Ghent University. E-mail: {\tt hvernaev@cage.ugent.be}}}
\date{}
\maketitle
\emph{2000 Mathematics subject classification: 46F30.}

\begin{abstract}
We investigate density of various subalgebras of regular generalized functions in the special Colombeau algebra $\Gen(\Omega)$ of generalized functions.
\end{abstract}

\section{Introduction}
M.~Oberguggenberger introduced the algebra $\Gen^\infty(\Omega)$ of regular generalized functions in order to develop a hypoelliptic regularity theory and hyperbolic propagation of singularities in the algebra $\Gen(\Omega)$ of Colombeau generalized functions \cite{O92}, where it takes over the role of the subalgebra of $\Cnt[\infty]$-regular functions in the space $\test'(\Omega)$ of distributions. It thus became the starting point of investigations of microlocal regularity in generalized function algebras (see \cite{DPS98,GH05,HK01,HOS06,NPS,S92} and the references therein). More recently, various other subalgebras of regular generalized functions have been considered, from the point of view of generalized analytic functions \cite{AFJ05}, kernel theorems \cite{D05}, propagation of singularities \cite{O06} and microlocal analysis \cite{D06}. We show that, in contrast with the situation of $\Cnt[\infty](\Omega)$ as a subalgebra of $\test'(\Omega)$ (and therefore maybe surprisingly), the subalgebra $\Gen^\infty(\Omega)$ and the subalgebras $\GenLin{a}(\Omega)$ considered in \cite{D05,D06} are not dense in the algebra $\Gen(\Omega)$. On the other hand, the subalgebra of sublinear or S-analytic generalized functions is dense in $\Gen(\Omega)$.

\section{Notations}
Let $\Omega\subseteq\R^d$ be open. By $K\csub\Omega$, we denote a compact subset of $\Omega$.\\
For $u\in\Cnt[\infty](\Omega)$, $K\csub\Omega$ and $\alpha\in\N^d$, let $p_{\alpha,K}(u):= \sup_{x\in K} \abs{\partial^\alpha u(x)}$. For $k\in\N$, let $p_{k,K}(u):= \max_{\abs\alpha =k} p_{\alpha,K}(u)$.\\
The special algebra of Colombeau generalized functions (see e.g.\ \cite{GKOS}) is $\Gen(\Omega) :=\EMod(\Omega)/\Null(\Omega)$, where
\begin{align*}
\EMod(\Omega) =\,&\big\{(u_\eps)_\eps \in\Cnt[\infty](\Omega)^{(0,1)}: (\forall
K\csub\Omega) (\forall\alpha\in\N^d) (\exists N\in\N)\\
&(p_{\alpha, K}(u_\eps)\le \eps^{-N}, \text{ for small }\eps)\big\}\\
\Null(\Omega) =\,&\big\{(u_\eps)_\eps \in\Cnt[\infty](\Omega)^{(0,1)}: (\forall
K\csub\Omega) (\forall\alpha\in\N^d) (\forall m\in\N)\\
&(p_{\alpha, K}(u_\eps)\le \eps^m, \text{ for small }\eps)\big\}.
\end{align*}
By $[(u_\eps)_\eps]$, we denote the generalized function with representative $(u_\eps)_\eps\in\EMod(\Omega)$.\\
The subalgebra $\Gen_c(\Omega)$ of compactly supported generalized functions consists of those $u\in\Gen(\Omega)$ such that for some $K\csub\Omega$, the restriction of $u$ to $\Omega\setminus K$ equals $0$ (as an element of $\Gen(\Omega\setminus K)$).\\
For $K\csub\Omega$, the algebra $\Gen^\infty(K)$ consists of those $u\in\Gen(\Omega)$ such that for one (and hence for each) representative $(u_\eps)_\eps$,
\[
(\exists N\in\N) (\forall\alpha\in\N^d) \big(p_{\alpha, K}(u_\eps)\le\eps^{-N}, \text{ for small }\eps\big).
\]
For $(z_\eps)_\eps\in\C^{(0,1)}$, the valuation $\val(z_\eps):= \sup\{b\in\R: \abs{z_\eps}\le \eps^b$, for small $\eps \}$ and the so-called sharp norm $\sharpnorm{z_\eps}:= e^{- \val(z_\eps)}$. For $u\in\Gen(\Omega)$, $P_{\alpha,K}(u) := \sharpnorm{p_{\alpha,K}(u_\eps)}$ ($\alpha\in\N^d$) and $P_{k,K}(u) := \sharpnorm{p_{k,K}(u_\eps)}$ ($k\in\N$), independent of the representative $(u_\eps)_\eps$ of $u$. The ultra-pseudo-seminorms $P_{\alpha,K}$ ($\alpha\in\N^d$, $K\csub\Omega$) determine a topology on $\Gen(\Omega)$ called sharp topology \cite{B90,G05,S92}. Then $u\in\Gen^\infty(K)$ iff $\sup_{k\in\N}P_{k,K}(u)<+\infty$. Further, the algebra $\Gen^\infty(\Omega):= \bigcap_{K\csub \Omega} \Gen^\infty(K)$ \cite{O92}.\\
For $K\csub\Omega$, the algebra $\GenLin{a}(K)$ of generalized functions of sublinear growth with slope smaller than $a>0$ ($a\in\R$) on $K$ consists of those $u\in\Gen(\Omega)$ such that for one (and hence for each) representative $(u_\eps)_\eps$,
\[(\exists a'<a) (\exists b\in\R) (\forall\alpha\in\N^d) (p_{\alpha,K}(u_\eps)\le\eps^{-a'\abs\alpha - b}, \text{ for small }\eps)\]
or, equivalently,
\[(\exists a'<a) (\exists c\in\R) (P_{\alpha,K}(u)\le c e^{a'\abs\alpha}, \forall \alpha\in\N^d),\]
which can still be expressed concisely by $\limsup_{k\to\infty} \frac{\ln P_{k,K}(u)}{k}< a$. Since $P_{\alpha,K}(uv)\le \max_{\beta\le\alpha} (P_{\beta,K}(u) P_{\alpha-\beta,K}(v))$ by Leibniz's rule, $\GenLin{a}(K)$ are subalgebras of $\Gen(\Omega)$.\\
For $a=0$, $\GenLin{0}(K) := \bigcap_{a>0}\GenLin{a}(K)$. Clearly, $\Gen^\infty(K)\subseteq \GenLin{0}(K)$.\\
Again, the algebras $\GenLin{a}(\Omega) := \bigcap_{K\csub\Omega} \GenLin{a}(K)$ ($a\ge 0$) \cite{D05,D06}. Clearly, $\Gen^\infty(\Omega)\subseteq \GenLin{0}(\Omega)$.\\
By definition, $u=[(u_\eps)_\eps]\in\Gen(\Omega)$ is sublinear \cite{AFJ05,PSV07} iff for each $K\csub\Omega$ and each $(x_\eps)_\eps\in K^{(0,1)}$, there exists $k\in\R$ and $(p_n)_{n\in\N}\in\R^\N$ such that $\lim_{n\to\infty} p_n + kn = \infty$ and for each $\alpha\in\N^d$, $\abs{\partial^\alpha u_\eps(x_\eps)}\le \eps^{p_{\abs\alpha}}$, for small $\eps$.
It can be shown \cite[Thm.~5.7]{AFJ05}, \cite[Thm.~10]{PSV07} that the algebra of sublinear generalized functions exactly contains those $u\in\Gen(\Omega)$ satisfying a natural condition of analyticity (called $S$-real analyticity in \cite{PSV07}). Sublinearity can still be characterized as follows by means of the algebras $\GenLin{a}(K)$:
\begin{lemma}
Let $u\in\Gen(\Omega)$. Then $u$ is sublinear iff for each $K\csub\Omega$, there exists $a>0$ ($a\in\R$) such that $u\in\GenLin{a}(K)$.
\end{lemma}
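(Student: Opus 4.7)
The direction $(\Leftarrow)$ is a pointwise specialisation of the uniform bound. Suppose $u \in \GenLin{a}(K)$, so there exist $a' < a$ and $b \in \R$ with $p_{\alpha, K}(u_\eps) \le \eps^{-a' \abs{\alpha} - b}$ for small $\eps$. For any $(x_\eps) \in K^{(0,1)}$ and $\alpha \in \N^d$, $\abs{\partial^\alpha u_\eps(x_\eps)} \le p_{\alpha, K}(u_\eps) \le \eps^{-a'\abs{\alpha}-b}$, so setting $p_n := -a'n - b$ and $k := a'+1$ yields $p_n + kn = n - b \to \infty$, exhibiting sublinearity.

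For $(\Rightarrow)$, I argue by contrapositive: assume $u \notin \GenLin{a}(K)$ for every $a > 0$ and build a net $(x_\eps) \in K^{(0,1)}$ refuting sublinearity at $K$. The characterisation $u \in \GenLin{a}(K) \iff \limsup_{k \to \infty} \ln P_{k, K}(u)/k < a$ gives $\limsup_k \ln P_{k, K}(u)/k = +\infty$, hence a subsequence $k_n \to \infty$ with $\val(p_{k_n, K}(u_\eps))/k_n \to -\infty$. Since $p_{k_n, K} = \max_{\abs{\alpha}=k_n} p_{\alpha, K}$ and the valuation of a finite maximum equals the minimum of the component valuations (a short check from the definition of $\val$), for each $n$ I can pick $\alpha_n$ with $\abs{\alpha_n} = k_n$ and $\val(p_{\alpha_n, K}(u_\eps)) = \val(p_{k_n, K}(u_\eps))$. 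Setting $b_n := \val(p_{\alpha_n, K}(u_\eps)) + 1$, the definition of $\val$ guarantees that $F_n := \{\eps \in (0,1) : p_{\alpha_n, K}(u_\eps) > \eps^{b_n}\}$ accumulates at $0$.

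The heart of the argument is the construction of a single net witnessing bad behaviour at all the scales $k_n$ simultaneously. By a diagonal procedure I select pairwise disjoint $T_n \subseteq F_n$, each still accumulating at $0$: in round $r \in \N$, for every $n \le r$ pick one new element of $F_n$ smaller than $2^{-r}$ and distinct from all previously chosen points (possible since each $F_n$ has arbitrarily small elements and only finitely many points have been chosen so far). Compactness of $K$ then supplies, for each $\eps \in T_n$, a point $y_{n, \eps} \in K$ with $\abs{\partial^{\alpha_n} u_\eps(y_{n, \eps})} = p_{\alpha_n, K}(u_\eps) > \eps^{b_n}$. Define $x_\eps := y_{n, \eps}$ for $\eps \in T_n$ and $x_\eps \in K$ arbitrarily otherwise; this is unambiguous because the $T_n$'s are disjoint.

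Sublinearity applied to $(x_\eps)$ supplies $k \in \R$ and $(p_m) \in \R^\N$ with $p_m + km \to \infty$ and $\abs{\partial^{\alpha_n} u_\eps(x_\eps)} \le \eps^{p_{k_n}}$ for $\eps$ small (depending on $\alpha_n$). Choosing such $\eps \in T_n$ (available since $T_n$ accumulates at $0$) gives $\eps^{b_n} < \eps^{p_{k_n}}$, hence $p_{k_n} < b_n$; dividing by $k_n$ forces $p_{k_n}/k_n \to -\infty$, which contradicts $p_m + km \to \infty$ (the latter implies $\liminf_m p_m/m \ge -k$). The main obstacle is the diagonal construction: one must simultaneously ensure that the $T_n$'s remain disjoint (so $(x_\eps)$ is well defined) and that each $T_n$ still accumulates at $0$ (so that the single pointwise witness at each $\eps \in T_n$ really forces $\val(\abs{\partial^{\alpha_n} u_\eps(x_\eps)}) < b_n$ at the level $k_n$).
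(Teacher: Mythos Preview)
Your proof is correct and follows essentially the same route as the paper: the $(\Leftarrow)$ direction is identical, and for $(\Rightarrow)$ both arguments proceed by contrapositive, extracting for each level $n$ a multi-index $\alpha_n$ and a set of $\eps$'s accumulating at $0$ where $p_{\alpha_n,K}(u_\eps)$ is too large, then diagonalising to obtain disjoint such sets and assembling a single net $(x_\eps)_\eps\in K^{(0,1)}$ that contradicts sublinearity. The only cosmetic difference is that you phrase the ``too large'' bounds via the valuation/$\limsup$ characterisation (choosing $b_n=\val(p_{\alpha_n,K}(u_\eps))+1$ with $b_n/k_n\to-\infty$), whereas the paper negates the definition of $\GenLin{a}(K)$ directly to obtain the explicit exponents $-n\abs{\alpha_n}-n$; the resulting contradictions with the sublinearity bound $p_m+km\to\infty$ are then derived in the same way.
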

\begin{proof}
$\Rightarrow$: let $u$ be sublinear and suppose that there exists $K\csub\Omega$ such that $u\notin\GenLin{a}(K)$, for each $a>0$. Then we find $\alpha_n\in\N$ (for each $n\in\N$), $\eps_{n,m}\in(0,1/m)$ (for each $n,m\in\N$) (by enumerating the countable family $(\eps_{n,m})_{n,m}$, we can successively choose the $\eps_{n,m}$ such that they are all different) and $x_{\eps_{n,m}}\in K$ such that $\abs{\partial^{\alpha_n} u_{\eps_{n,m}}(x_{\eps_{n,m}})}>\eps_{n,m}^{-n\abs{\alpha_n}-n}$, for each $n,m\in\N$. Let $x_\eps\in K$ arbitrary if $\eps\in (0,1)\setminus \{\eps_{n,m}: n,m\in\N\}$. By assumption, there exist $k\in\R$, $(p_n)_{n\in\N}\in\R^\N$ and $N\in\N$ such that for each $\alpha\in\N^d$ with $\abs\alpha\ge N$, $\abs{\partial^\alpha u_\eps(x_\eps)}\le \eps^{p_{\abs\alpha}}\le \eps^{-k\abs\alpha}$, for small $\eps$. Since $u\in\Gen(\Omega)$, it follows that there exists $b\in\R$ such that for each $\alpha\in\N^d$, $\abs{\partial^\alpha u_\eps(x_\eps)}\le \eps^{-k\abs\alpha-b}$, for small $\eps$. This contradicts the fact that for $n\in\N$ with $n\ge k$ and $n\ge b$, $\lim_{m}\eps_{n,m} = 0$ and $\abs[]{\partial^{\alpha_n} u_{\eps_{n,m}}(x_{\eps_{n,m}})}>\eps_{n,m}^{-n\abs{\alpha_n}-n}$, $\forall m\in\N$.\\
$\Leftarrow$: let $K\csub\Omega$ and $(x_\eps)_\eps\in K^{(0,1)}$. By assumption, there exist $a,b\in\R$ such that for each $\alpha\in\N^d$, $p_{\alpha,K}(u_\eps)\le\eps^{-a\abs\alpha - b}$, for small $\eps$. Then, for $k:= a + 1$ and $p_n:= -an-b$, $\lim_{n} p_n +kn = \infty$ and for each $\alpha\in\N^d$, $\abs{\partial^\alpha u_\eps(x_\eps)}\le p_{\alpha,K}(u_\eps)\le\eps^{p_{\abs\alpha}}$, for small $\eps$.
\end{proof}

\section{$\Gen^\infty(\Omega)$ and $\GenLin{0}(\Omega)$}
Our method is based upon a quantitative version of an argument used in \cite[Thm.~1.2.3]{GKOS} (cf.\ also \cite[Prop.\ 1.6]{HOS06} and \cite{HV_pointwise}), which can in fact be traced back to \cite{Landau}.
\begin{prop}\label{prop_Landau}
Let $K\csub\Omega\subseteq\R^d$. Suppose that there exists $r\in\R^+$ such that for each $x\in K$, there exist $d$ line segments of length $r$ containing $x$ in linearly independent directions that are contained in $K$. Let $u\in\Gen(\Omega)$. If for some $k\in\N\setminus\{0\}$, $P_{k,K}(u) > P_{k-1,K}(u)$, then $P_{k,K}^2(u)\le P_{k-1,K}(u) P_{k+1,K}(u)$.
\end{prop}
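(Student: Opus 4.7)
My plan is to translate the sharp-norm inequality into a statement about valuations and then reduce to a classical Landau--Kolmogorov pointwise estimate. Setting $v_j := \val(p_{j,K}(u_\eps))$, one has $P_{j,K}(u) = e^{-v_j}$: the hypothesis $P_{k,K}(u) > P_{k-1,K}(u)$ reads $v_k < v_{k-1}$, and the target $P_{k,K}^2(u) \le P_{k-1,K}(u) P_{k+1,K}(u)$ becomes $2 v_k \ge v_{k-1} + v_{k+1}$. Since positive $\eps$-independent constants have valuation $0$, they are invisible in the sharp norm, and I can freely establish the pointwise bounds below up to a multiplicative constant depending only on $K$, $r$, and $d$.

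The central step is the pointwise Landau estimate:
\[
p_{k,K}(u_\eps) \le \frac{C_1}{h}\, p_{k-1,K}(u_\eps) + C_2\, h\, p_{k+1,K}(u_\eps), \qquad (\ast)
\]
valid for every $\eps \in (0,1)$ and every $h \in (0, r/2]$, with $C_1, C_2 > 0$ depending only on $K$, $r$, $d$. To prove $(\ast)$, I fix $x_0 \in K$ and $\alpha\in\N^d$ with $\abs\alpha = k$ and $\alpha_j \ge 1$; the hypothesis provides unit vectors $v_1, \ldots, v_d$ (depending on $x_0$), linearly independent, such that the length-$r$ segments through $x_0$ in directions $v_i$ lie in $K$. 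Writing the standard basis vector $e_j = \sum_i a_i(x_0) v_i$ and using linearity of the gradient,
\[
\partial^\alpha u_\eps(x_0) = \partial_j \partial^{\alpha - e_j} u_\eps(x_0) = \sum_i a_i(x_0)\, (v_i \cdot \nabla) \partial^{\alpha - e_j} u_\eps(x_0).
\]
A second-order Taylor expansion of $\partial^{\alpha - e_j} u_\eps$ along the segment in direction $v_i$ bounds each $\abs{(v_i \cdot \nabla) \partial^{\alpha - e_j} u_\eps(x_0)}$ by $2 h^{-1} p_{k-1,K}(u_\eps) + (dh/2)\, p_{k+1,K}(u_\eps)$, from which $(\ast)$ follows with the constants absorbing $\sup_{x_0 \in K,\,j} \sum_i \abs{a_i(x_0)}$. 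Uniformity of this last supremum on $K$ is the delicate point; it follows from compactness of $K$ together with the lower semi-continuity on $K$ of the ``best admissible basis'' determinant $x \mapsto \sup \abs{\det[v_1|\cdots|v_d]}$ (supremum over unit-vector bases admissible at $x$), which is positive on $K$ by hypothesis.

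With $(\ast)$ in hand I optimise $h_\eps$. In valuations, $(\ast)$ reads $v_k \ge \min(v_{k-1} - \val(h_\eps),\, \val(h_\eps) + v_{k+1})$. The choice $\val(h_\eps) = (v_{k-1} - v_{k+1})/2$ balances the two arguments and yields $v_k \ge (v_{k-1} + v_{k+1})/2$, the desired inequality. It is realised by $h_\eps = (r/2)\,\eps^{(v_{k-1} - v_{k+1})/2}$ (which satisfies $h_\eps \le r/2$ for $\eps \le 1$) precisely when the exponent is non-negative, i.e.\ $v_{k+1} \le v_{k-1}$. If instead $v_{k+1} > v_{k-1}$, the constraint $h_\eps \le r/2$ forces $\val(h_\eps) = 0$ (e.g.\ $h_\eps = r/2$), yielding $v_k \ge \min(v_{k-1}, v_{k+1}) = v_{k-1}$, contradicting the hypothesis $v_k < v_{k-1}$. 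Hence this unfavourable case cannot occur, completing the proof. The hardest step is establishing $(\ast)$ with $\eps$-uniform constants in dimension $d \ge 2$, since the passage from coordinate to directional derivatives requires the compactness-plus-semi-continuity argument for the basis-change coefficients.
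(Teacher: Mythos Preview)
Your approach is essentially the paper's: derive the Landau-type estimate $(\ast)$ via second-order Taylor expansion along the given segments, then optimise the step length to obtain the sharp-norm inequality. The paper takes $h=\eps^a$ with $a>0$ and optimises over $a$; your formulation via valuations is equivalent. Both arguments dispose of the degenerate case ($v_{k+1}>v_{k-1}$, resp.\ $P_{k+1,K}\le P_{k-1,K}$) by showing it forces $P_{k,K}\le P_{k-1,K}$, contradicting the hypothesis.

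One correction on the point you flag as delicate. The function $x\mapsto\sup\abs{\det[v_1|\cdots|v_d]}$ (supremum over admissible unit bases at $x$) is \emph{upper} semi-continuous, not lower: the admissibility graph $\{(x,v):\text{a length-}r\text{ segment through }x\text{ in direction }v\text{ lies in }K\}$ is closed because $K$ is closed, and a supremum of a continuous function over fibres of a closed graph is u.s.c. Upper semi-continuity on a compact set yields a maximum, not a positive infimum, so your argument as written does not secure the uniform bound on $\sum_i\abs{a_i(x_0)}$. (A concrete illustration: for $K=([0,2]\times[0,\delta])\cup([0,\delta]\times[0,2])$ the function jumps from $\approx\delta$ to $1$ as $x$ enters the corner square, so it is not l.s.c.) The paper itself simply asserts that the basis-change coefficients are ``independent of $\eps$ and $x$'' without proof, so you are not worse off than the original; and in all the applications $K$ is a finite union of coordinate cubes, where one takes the $v_i$ to be the standard basis uniformly and the issue disappears.
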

\begin{proof}
Let first $k=1$. Let $x\in K$. Let $e_1,\dots,e_d\in\R^d$ be linearly independent unit vectors such that the line segments $[x, x+ \frac{r}{2} e_j]\subseteq K$. Denote the directional derivative in the direction $e_j$ by $\partial_{e_j}$. Let $a\in\R$, $a>0$. For $\eps\in (0,1)$, by Taylor's formula there exist $\theta_\eps\in [0,1]$ such that
\[
\partial_{e_j} u_\eps(x) = \eps^{-a} u_\eps(x + \eps^a e_j) - \eps^{-a} u_\eps(x) + \frac{\eps^a}{2} \partial_{e_j}^2 u_\eps(x + \eps^a \theta_\eps e_j).
\]
Hence for $\eps\le \eps_0$ (where $\eps_0$ does not depend on $x\in K$),
\[
\abs{\partial_{e_j} u_\eps(x)}\le
2\eps^{-a} \sup_{y\in K} \abs{u_\eps(y)} + \eps^a \sup_{y\in K} \abs[\big]{\partial_{e_j}^2 u_\eps(y)}\le 2\eps^{-a} p_{0,K}(u_\eps) + \eps^a p_{2,K}(u_\eps).
\]
Since $e_1$, \dots, $e_d$ are linearly independent, we can write $\partial_1$, \dots, $\partial_d$ as a linear combination (with coefficients independent of $\eps$ and $x$) of $\partial_{e_1}$, \dots, $\partial_{e_d}$. Thus there exists $C\in\R$ such that $p_{1,K}(u_\eps)\le C\eps^{-a}p_{0,K}(u_\eps) + C\eps^a p_{2,K}(u_\eps)$, and $P_{1,K}(u)\le \max(e^a P_{0,K}(u), e^{-a} P_{2,K}(u))$. Should $P_{2,K}(u)\le P_{0,K}(u)$, then letting $a\to 0$ would yield $P_{1,K}(u)\le P_{0,K}(u)$, contradicting the hypotheses. Hence $P_{2,K}(u) > P_{0,K}(u)$, and we can choose $a>0$ such that $e^{2a} = P_{2,K}(u)/P_{0,K}(u)$ (since the case $P_{0,K}(u)=0$ is trivial).\\
If $k\in\N\setminus\{0\}$ arbitrary, the same reasoning can be applied to all $\partial^\alpha u$ with $\abs\alpha = k-1$ instead of $u$.
\end{proof}

\begin{cor}(cf.\ \cite[Thm.~1.2.3]{GKOS})\label{cor_null_ideal}
Let $K\csub\Omega\subseteq\R^d$. Suppose that there exists $r\in\R^+$ such that for each $x\in K$, there exist $d$ line segments of length $r$ containing $x$ in linearly independent directions that are contained in $K$. Let $u\in\Gen(\Omega)$. If for some $k\in\N$, $P_{k,K}(u)=0$, then $P_{l,K}(u)=0$, $\forall l\ge k$.
\end{cor}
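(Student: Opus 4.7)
My plan is to deduce this directly from Proposition~\ref{prop_Landau} by a minimal counterexample argument. First I would assume, toward contradiction, that the set $S:=\{l\ge k:P_{l,K}(u)>0\}$ is non-empty, and set $l_0:=\min S$. Since $P_{k,K}(u)=0$ by hypothesis, $l_0\ge k+1\ge 1$, and since $l_0-1\ge k$ lies outside $S$, we have $P_{l_0-1,K}(u)=0<P_{l_0,K}(u)$.

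Next I would invoke Proposition~\ref{prop_Landau} at the index $l_0$ (its geometric hypothesis on $K$ is exactly the one assumed in the corollary, so it transfers verbatim). The condition $P_{l_0,K}(u)>P_{l_0-1,K}(u)$ is met, so the proposition yields
\[P_{l_0,K}(u)^2\le P_{l_0-1,K}(u)\,P_{l_0+1,K}(u)=0,\]
forcing $P_{l_0,K}(u)=0$ and contradicting $l_0\in S$. Hence $S$ is empty, which is the claim.

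I do not expect any real obstacle: the corollary is essentially a qualitative repackaging of the log-convexity inequality of Proposition~\ref{prop_Landau}, and all the substantive analytic content (Taylor expansion at scale $\eps^a$ followed by optimisation in $a$) has already been done there. The only mild bookkeeping point is the edge case $l_0=k+1$, where the vanishing $P_{l_0-1,K}(u)=0$ comes from the hypothesis on $k$ rather than from the minimality of $l_0$; but since $l_0-1\ge k$ in every case, a single line handles both situations uniformly.
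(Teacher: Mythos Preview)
Your argument is correct and is essentially the paper's own proof: the paper shows $P_{k+1,K}(u)=0$ by exactly the same contradiction via Proposition~\ref{prop_Landau} and then says ``the result follows inductively,'' whereas you phrase the same induction as a minimal-counterexample argument. No substantive difference.
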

\begin{proof}
If $P_{k+1,K}(u)\ne 0$, then $P_{k+1,K}(u)^2\le P_{k,K}(u)P_{k+2,K}(u)=0$ by proposition \ref{prop_Landau}, a contradiction. The result follows inductively.
\end{proof}

\begin{prop}\label{prop_Geninfty_char}
Let $K\csub\Omega$ satisfy the hypothesis of proposition \ref{prop_Landau}. Let $u\in\GenLin{0}(K)$. Then $P_{k,K}(u)$ are decreasing in $k$, and
\[\Gen^\infty(K) = \GenLin{0}(K) = \{u\in\Gen(\Omega): P_{k,K}(u)\le P_{0,K}(u), \forall k\in\N\}.\]
In particular, $\Gen^\infty(K)$ is closed in $\Gen(\Omega)$.
\end{prop}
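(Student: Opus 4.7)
The idea is to use Proposition \ref{prop_Landau} iteratively: a single strict increase $P_{k,K}(u) > P_{k-1,K}(u)$ propagates into geometric growth of the whole sequence $(P_{k,K}(u))_k$, which is incompatible with $u \in \GenLin{0}(K)$.

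First I would establish the monotonicity claim by contradiction. Fix $u \in \GenLin{0}(K)$ and suppose $P_{k,K}(u) > P_{k-1,K}(u)$ for some $k\ge 1$. Corollary \ref{cor_null_ideal} rules out $P_{k-1,K}(u) = 0$ (since otherwise $P_{k,K}(u)$ would also vanish), so the ratio $r := P_{k,K}(u)/P_{k-1,K}(u) > 1$ is well defined. Proposition \ref{prop_Landau} then yields
\[
P_{k+1,K}(u) \ge \frac{P_{k,K}(u)^2}{P_{k-1,K}(u)} = r\,P_{k,K}(u) > P_{k,K}(u),
\]
so the hypothesis of Proposition \ref{prop_Landau} propagates to index $k+1$. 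A short induction gives $P_{k+j,K}(u) \ge r^j P_{k,K}(u)$ for all $j \ge 0$, whence $\limsup_{n\to\infty} (\ln P_{n,K}(u))/n \ge \ln r > 0$, contradicting the characterization $u\in\GenLin{0}(K) \Leftrightarrow \limsup_{k\to\infty}(\ln P_{k,K}(u))/k \le 0$ recorded in Section 2.

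Monotonicity then delivers the equalities. Indeed it gives $P_{k,K}(u) \le P_{0,K}(u) < \infty$ for every $u \in \GenLin{0}(K)$, so $\sup_k P_{k,K}(u) < \infty$ and $u \in \Gen^\infty(K)$; combined with the trivial inclusion $\Gen^\infty(K) \subseteq \GenLin{0}(K)$ noted earlier, this proves the first equality and the inclusion $\Gen^\infty(K) \subseteq \{u : P_{k,K}(u) \le P_{0,K}(u),\ \forall k\in\N\}$. The reverse inclusion is immediate from the characterization $u \in \Gen^\infty(K) \Leftrightarrow \sup_k P_{k,K}(u) < \infty$.

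For the closedness, I would use the middle description: $\Gen^\infty(K) = \bigcap_{k\in\N}\{u : P_{k,K}(u) \le P_{0,K}(u)\}$ realizes $\Gen^\infty(K)$ as an intersection of level sets of continuous maps, since each $P_{k,K}$ is continuous on $\Gen(\Omega)$ in the sharp topology (as one of the ultra-pseudo-seminorms defining it). The main delicate point in this plan is the induction in paragraph two: one must verify that the strict inequality between consecutive $P$-values is preserved at every step in order to reapply Proposition \ref{prop_Landau}, but this is automatic from the geometric factor $r > 1$.
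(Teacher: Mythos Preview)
Your proposal is correct and follows essentially the same route as the paper: assume a strict increase at some index, use Corollary~\ref{cor_null_ideal} to get positivity, apply Proposition~\ref{prop_Landau} iteratively to force geometric growth with ratio $r>1$, and contradict membership in $\GenLin{0}(K)$ via the $\limsup$ characterization; closedness then comes from continuity of the $P_{k,K}$. Your write-up is in fact slightly more explicit than the paper's in two places (verifying that the strict inequality propagates so Proposition~\ref{prop_Landau} can be reapplied, and spelling out the three-set equality), but the underlying argument is the same.
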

\begin{proof}
Let $u\in\Gen(\Omega)$. If $P_{k,K}(u)$ are not decreasing in $k$, then there exists $k\in\N\setminus \{0\}$ such that $P_{k,K}(u)> P_{k-1,K}(u) > 0$ by corollary \ref{cor_null_ideal}. Let $r:= P_{k,K}(u)/ P_{k-1,K}(u)>1$. By proposition \ref{prop_Landau}, $P_{k+1,K}(u)\ge r P_{k,K}(u)$ (in particular, $P_{k+1,K}(u) > P_{k,K}(u)$). Inductively, $P_{k+n,K}(u)\ge r^n P_{k,K}(u)$, for each $n\in\N$. Thus $\limsup_{n\to\infty} \frac{\ln P_{n+k,K}(u)}{n+k} \ge \limsup_{n\to\infty} \frac{\ln (r^n P_{k,K}(u))}{n+k} = \ln r > 0$, and $u\notin\GenLin{0}(K)$. In particular, $u\notin\Gen^\infty(K)$.\\
The fact that $\Gen^\infty(K)$ is closed follows by continuity of $P_{k,K}$.
\end{proof}

\begin{thm}
$\Gen^\infty(\Omega) = \GenLin{0}(\Omega)$ is closed in $\Gen(\Omega)$. In particular, $\Gen^\infty(\Omega)$ is not dense in $\Gen(\Omega)$.
\end{thm}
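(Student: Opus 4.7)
My strategy is to reduce the global claim on $\Omega$ to the case already handled by Proposition~\ref{prop_Geninfty_char}. Since that proposition requires the line-segment hypothesis of Proposition~\ref{prop_Landau}, the point is to restrict the defining intersections $\Gen^\infty(\Omega)=\bigcap_{K\csub\Omega}\Gen^\infty(K)$ and $\GenLin{0}(\Omega)=\bigcap_{K\csub\Omega}\GenLin{0}(K)$ to a cofinal subfamily of such compacts.

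If $K_1\subseteq K_2$ are both compact subsets of $\Omega$, then $p_{\alpha,K_1}\le p_{\alpha,K_2}$ and therefore $\Gen^\infty(K_2)\subseteq\Gen^\infty(K_1)$ as well as $\GenLin{0}(K_2)\subseteq\GenLin{0}(K_1)$. Consequently both intersections may be restricted to any family $\mathcal{K}$ of compacts that is cofinal under inclusion. For $\mathcal{K}$ I would take the finite unions of closed axis-parallel cubes of positive side contained in $\Omega$: given $K\csub\Omega$, fix $r>0$ smaller than the distance from $K$ to $\R^d\setminus\Omega$, cover $K$ by finitely many such cubes of side $r$ lying in $\Omega$, and let $K'$ be their union. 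Then $K\subseteq K'\in\mathcal{K}$, and through every $x\in K'$ the cube of the covering containing $x$ provides $d$ segments of length $r$ through $x$ in the coordinate directions, each contained in $K'$; thus $K'$ satisfies the hypothesis of Proposition~\ref{prop_Landau}.

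Proposition~\ref{prop_Geninfty_char} then gives $\Gen^\infty(K')=\GenLin{0}(K')$, closed in $\Gen(\Omega)$, for every $K'\in\mathcal{K}$. Intersecting over $\mathcal{K}$ yields $\Gen^\infty(\Omega)=\GenLin{0}(\Omega)$ as a closed subset of $\Gen(\Omega)$. For the final sentence it suffices to exhibit one element of $\Gen(\Omega)\setminus\Gen^\infty(\Omega)$: any standard non-regular generator will do, for example the class of $\psi(x)\cos(x_1/\eps)$ for a nonzero compactly supported $\psi\in\Cnt[\infty](\Omega)$, whose $k$-th derivative in the first coordinate grows like $\eps^{-k}$, so that no single $N$ dominates all orders. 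Since $\Gen^\infty(\Omega)$ is then proper and closed it cannot be dense. No substantial obstacle arises: identifying the cofinal cube family is the only ingredient that needs to be set up, everything else is a formal consequence of Proposition~\ref{prop_Geninfty_char}.
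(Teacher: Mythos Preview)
Your proposal is correct and follows essentially the same route as the paper: restrict the intersections defining $\Gen^\infty(\Omega)$ and $\GenLin{0}(\Omega)$ to the cofinal family of finite unions of axis-parallel cubes, then invoke Proposition~\ref{prop_Geninfty_char}. The only addition is that you spell out the cofinality argument and supply an explicit witness for $\Gen^\infty(\Omega)\subsetneq\Gen(\Omega)$, whereas the paper takes this strict inclusion for granted.
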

\begin{proof}
$\Gen^\infty(\Omega) = \bigcap_K \Gen^\infty(K)$, where $K$ runs over all compact subsets of $\Omega$ that are a finite union of $d$-dimensional cubes parallel with the coordinate axes (hence satisfying the hypothesis of proposition \ref{prop_Landau}), and similarly for $\GenLin{0}(\Omega)$. The conclusions follow from proposition \ref{prop_Geninfty_char}.
\end{proof}

\section{$\GenLin{a}(\Omega)$, $a>0$}
\begin{prop}\label{prop_GenLin_char}
Let $K\csub\Omega$ satisfy the hypothesis of proposition \ref{prop_Landau}. Let $a\in\R$, $a\ge 1$. Then
\begin{multline*}
\{u\in\Gen(\Omega): (\exists c\in\R) (P_{k,K}(u)\le c a^k, \forall k\in\N)\}\\
= \{u\in\Gen(\Omega): P_{k+1,K}(u)\le a P_{k,K}(u), \forall k\in\N\}.
\end{multline*}
In particular, this describes a closed subset of $\Gen(\Omega)$.
\end{prop}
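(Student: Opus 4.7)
The proposition has two parts: a set identity and its closedness. For the identity, the direction $\supseteq$ is a direct induction: from $P_{k+1,K}(u) \leq a P_{k,K}(u)$ for every $k$, one obtains $P_{k,K}(u) \leq a^k P_{0,K}(u)$, so the constant $c := P_{0,K}(u)$ (which is finite since $u \in \Gen(\Omega)$) witnesses the left-hand description.

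For the harder direction $\subseteq$, my plan is to transcribe the iteration used in the proof of Proposition \ref{prop_Geninfty_char}, with the threshold $1$ replaced by $a$. Assume $P_{k,K}(u) \leq c a^k$ for all $k$, but suppose for contradiction that some $k_0 \in \N$ satisfies $P_{k_0+1,K}(u) > a P_{k_0,K}(u)$. Since $a \geq 1$ this gives the strict inequality $P_{k_0+1,K}(u) > P_{k_0,K}(u)$; moreover $P_{k_0,K}(u) > 0$, for otherwise Corollary \ref{cor_null_ideal} would force $P_{k_0+1,K}(u) = 0$. Hence the ratio $r := P_{k_0+1,K}(u)/P_{k_0,K}(u) > a \geq 1$ is well-defined. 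Applying Proposition \ref{prop_Landau} at index $k_0+1$ yields $P_{k_0+1,K}(u)^2 \leq P_{k_0,K}(u) P_{k_0+2,K}(u)$, i.e.\ $P_{k_0+2,K}(u) \geq r P_{k_0+1,K}(u) > P_{k_0+1,K}(u)$; Proposition \ref{prop_Landau} therefore reapplies, and induction yields $P_{k_0+n,K}(u) \geq r^{n-1} P_{k_0+1,K}(u)$ for all $n \geq 1$. Combined with the upper bound $P_{k_0+n,K}(u) \leq c a^{k_0+n}$, this forces $(r/a)^{n-1}$ to stay bounded, contradicting $r > a$.

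For the closedness claim I would use the right-hand description directly. Each ultra-pseudo-seminorm $P_{\alpha,K}$ is continuous on $\Gen(\Omega)$ for the sharp topology (by definition of that topology), and so is the finite maximum $P_{k,K} = \max_{|\alpha|=k} P_{\alpha,K}$. Thus each set $\{u \in \Gen(\Omega) : P_{k+1,K}(u) \leq a P_{k,K}(u)\}$ is closed as the preimage of the closed half-plane $\{(s,t) \in \R^2 : s \leq at\}$ under the continuous map $u \mapsto (P_{k+1,K}(u), P_{k,K}(u))$, and the intersection over $k \in \N$ is closed.

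The main delicacy I anticipate is keeping Proposition \ref{prop_Landau} applicable throughout the induction, i.e.\ maintaining the strict inequality $P_{k_0+n+1,K}(u) > P_{k_0+n,K}(u)$ from step to step. This is precisely the self-reinforcing feature of Landau's inequality that was already exploited in Proposition \ref{prop_Geninfty_char}: once the first ratio $r$ exceeds $1$, every subsequent ratio is at least $r$, so the induction never stalls.
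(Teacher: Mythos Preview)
Your argument is correct and mirrors the paper's own proof: the paper proves the nontrivial inclusion by the same contrapositive, invoking Corollary~\ref{cor_null_ideal} to secure $P_{k,K}(u)>0$, setting $r:=P_{k+1,K}(u)/P_{k,K}(u)>a$, and iterating Proposition~\ref{prop_Landau} to get $P_{k+n,K}(u)\ge r^n P_{k,K}(u)$, whence $\limsup_n P_{n,K}(u)/a^n=+\infty$. Your treatment of closedness via continuity of the $P_{k,K}$ is likewise what the paper relies on, just as in Proposition~\ref{prop_Geninfty_char}.
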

\begin{proof}
Let $u\in\Gen(\Omega)$. If $P_{k+1, K}(u)> a P_{k,K}(u)$, for some $k\in\N$, then $P_{k,K}(u)>0$ by corollary \ref{cor_null_ideal}. Let $r:= P_{k+1, K}(u)/P_{k,K}(u) > a$. By proposition \ref{prop_Landau}, $P_{k+n,K}(u)\ge r^n P_{k,K}(u)$, for each $n\in\N$. Thus $\limsup_{n\in\N} P_{n,K}(u)/a^n \ge \limsup_{n\in\N} \frac{r^{n-k} P_{k,K}(u)}{a^n} = +\infty$.\\
The other inclusion is clear.
\end{proof}

\begin{thm}
Let $a\in\R$, $a>0$. Then $\GenLin{a}(\Omega)$ is not dense in $\Gen(\Omega)$.
\end{thm}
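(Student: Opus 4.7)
The strategy is to exhibit, for a suitable $K\csub\Omega$, a closed proper subset $S$ of $\Gen(\Omega)$ containing $\GenLin{a}(\Omega)$. Then $\overline{\GenLin{a}(\Omega)}\subseteq S\ne\Gen(\Omega)$ and density fails. Closedness of $S$ will come directly from Proposition \ref{prop_GenLin_char}, while properness will be witnessed by a rapidly oscillating generalized function.

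Fix a compact cube $K\csub\Omega$ with nonempty interior, so that $K$ satisfies the hypothesis of Proposition \ref{prop_Landau}. Choose $a_0\in\R$ with $a<a_0$ (so $a_0>0$) and set $b:=e^{a_0}>1$. Define
\[
S:=\{u\in\Gen(\Omega): P_{k+1,K}(u)\le b\, P_{k,K}(u)\text{ for all }k\in\N\}.
\]
By Proposition \ref{prop_GenLin_char}, applied with base $b\ge 1$, $S$ is closed in $\Gen(\Omega)$. Moreover, if $u\in\GenLin{a}(\Omega)\subseteq\GenLin{a}(K)$, then by the definition of $\GenLin{a}(K)$ there exist $a'<a$ and $c\in\R$ such that $P_{k,K}(u)\le c\, e^{a'k}\le c\, b^k$ for every $k\in\N$; Proposition \ref{prop_GenLin_char} immediately translates this into $u\in S$. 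Hence $\GenLin{a}(\Omega)\subseteq S$.

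It remains to produce a single element of $\Gen(\Omega)\setminus S$. Take $\beta>a_0$ and define $u_\eps(x):=\cos(x_1/\eps^\beta)$. Since $p_{\alpha,K}(u_\eps)\le\eps^{-\beta\abs\alpha}$, the family $(u_\eps)_\eps$ lies in $\EMod(\Omega)$, so $u:=[(u_\eps)_\eps]\in\Gen(\Omega)$. Only the pure $x_1$ derivatives of $u_\eps$ are nonzero, and for $\eps$ small enough the projection of $K$ onto the $x_1$-axis, dilated by $\eps^{-\beta}$, contains an interval of length $>2\pi$; hence the supremum is attained and $p_{k,K}(u_\eps)=\eps^{-\beta k}$, giving $P_{k,K}(u)=e^{\beta k}$ for every $k\in\N$. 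Consequently $P_{k+1,K}(u)/P_{k,K}(u)=e^\beta>b$, so $u\notin S$, completing the proof. The only mildly delicate point is verifying equality (rather than the standard upper bound) in $p_{k,K}(u_\eps)=\eps^{-\beta k}$, which is ensured by the fast oscillation of the cosine in the small-$\eps$ regime.
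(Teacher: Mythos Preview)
Your proof is correct and follows essentially the same strategy as the paper: both exhibit a closed proper superset of $\GenLin{a}(\Omega)$ by invoking Proposition~\ref{prop_GenLin_char} on cubes. The paper intersects over all admissible $K$ with base $e^{a}$ and simply asserts strict containment, whereas you work with a single cube (which suffices) and supply the explicit witness $u_\eps(x)=\cos(x_1/\eps^\beta)$, thereby filling in a detail the paper leaves implicit; your intermediate choice of $a_0>a$ is harmless but unnecessary, since $b=e^{a}$ would already work.
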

\begin{proof}
$\GenLin{a}(\Omega)\subseteq \bigcap_{K}\{u\in\Gen(\Omega): (\exists c\in\R) (P_{k,K}(u)\le c e^{ak},\forall k\in\N)\}=:\mathcal A$, where $K$ runs over all compact subsets of $\Omega$ that are a finite union of $d$-dimensional cubes parallel with the coordinate axes. The set $\mathcal A$ is closed by proposition \ref{prop_GenLin_char} and is a strict subset of $\Gen(\Omega)$.
\end{proof}

\section{Sublinear generalized functions}
In order to investigate the density of the algebra of sublinear generalized functions, we start with the following proposition (see also \cite[Prop.\ 4.3.1]{S92}):
\begin{prop}\label{prop_convoluted_repres}
Let $\psi\in\Cnt[\infty](\R^d)$ with $\psi(x)=0$ if $\abs x\ge 1$ and $\int_{\R^d}\psi =1$. Denote by $\psi_\eps(x):=\eps^{-d}\psi(x/\eps)$, for each $\eps\in (0,1)$. 
If $u=[(u_\eps)_\eps]\in\Gen(\Omega)$, then $\lim_{n\to\infty}[(u_\eps\conv\psi_{\eps^{n}})_\eps]=u$.
\end{prop}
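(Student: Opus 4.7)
The plan is to verify convergence in the sharp topology directly: show that for each $K\csub\Omega$ and each $\alpha\in\N^d$, $P_{\alpha,K}(u - v_n) \to 0$ as $n\to\infty$, where $v_n := [(u_\eps \conv \psi_{\eps^n})_\eps]$. Since the ultra-pseudo-seminorms $P_{\alpha,K}$ generate the sharp topology, this is exactly the statement to prove.

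First I would handle the domain issue. The convolution $u_\eps \conv \psi_{\eps^n}$ is only literally defined where $u_\eps$ is known on a ball of radius $\eps^n$ around the point in question. So fix $K \csub K' \csub \Omega$. For $\eps$ sufficiently small, $\eps^n < d(K, \R^d \setminus K')$, so $u_\eps \conv \psi_{\eps^n}$ is well-defined on $K$ from $u_\eps|_{K'}$. Globally on $\Omega$, $v_n$ is defined via the usual Colombeau trick of patching with cutoffs, but any two such choices differ on $K$ by a sequence that is eventually $0$ there, hence null; this is irrelevant for the estimates below.

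Next comes the main estimate. Since $\partial^\alpha(u_\eps \conv \psi_{\eps^n}) = (\partial^\alpha u_\eps) \conv \psi_{\eps^n}$ and $\int \psi_{\eps^n} = 1$,
\[
\partial^\alpha u_\eps(x) - \partial^\alpha(u_\eps \conv \psi_{\eps^n})(x) = \int \bigl(\partial^\alpha u_\eps(x) - \partial^\alpha u_\eps(x - y)\bigr)\psi_{\eps^n}(y)\, dy.
\]
By the mean value theorem, for $x \in K$ and $|y| \le \eps^n$ (small $\eps$),
\[
\abs{\partial^\alpha u_\eps(x) - \partial^\alpha u_\eps(x - y)} \le \abs{y}\, p_{\abs\alpha+1, K'}(u_\eps),
\]
and a change of variables gives $\int \abs{y}\abs{\psi_{\eps^n}(y)}\, dy = C\eps^n$ with $C := \int \abs{z}\abs{\psi(z)}\, dz$. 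Combining,
\[
p_{\alpha, K}(u_\eps - u_\eps \conv \psi_{\eps^n}) \le C\eps^n\, p_{\abs\alpha+1, K'}(u_\eps).
\]

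Finally, since $(u_\eps)_\eps \in \EMod(\Omega)$, there exists $N \in \N$ (depending on $\alpha, K'$) with $p_{\abs\alpha+1,K'}(u_\eps) \le \eps^{-N}$ for small $\eps$. Thus $p_{\alpha, K}(u_\eps - u_\eps \conv \psi_{\eps^n}) \le C\eps^{n-N}$ for small $\eps$, so $\val$ of this sequence is at least $n - N$, giving $P_{\alpha,K}(u - v_n) \le e^{-(n-N)} \to 0$ as $n \to \infty$. There is no genuine obstacle; the only care needed is the bookkeeping around the local definition of the convolution, and ensuring that the loss $N$ from the moderateness estimate is absorbed by taking $n$ large, which is the whole point of using $\eps^n$ instead of $\eps$ as the mollifier scale.
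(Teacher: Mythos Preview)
Your proof is correct and follows essentially the same route as the paper: write the difference $\partial^\alpha u_\eps - (\partial^\alpha u_\eps)\conv\psi_{\eps^n}$ as an integral, apply the mean value theorem to gain a factor $\eps^n$, and control the resulting $p_{\abs\alpha+1}$-term on a slightly larger compact by moderateness. The paper stops at the estimate $p_{k,K}(u_\eps\conv\psi_{\eps^n}-u_\eps)\le \eps^n p_{k+1,K+r}(u_\eps)\int\abs\psi$ and leaves the passage to $P_{\alpha,K}(u-v_n)\to 0$ implicit, whereas you spell it out; otherwise the arguments are the same.
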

\begin{proof}
Let $n\in\N$ and $K\csub\Omega$. Then $u_\eps\conv\psi_{\eps^{n}}(x)=\int_{\abs{t}\le \eps^n} u_\eps(x-t)\psi_{\eps^{n}}(t)\,dt$ is well-defined as soon as $d(x,\R^d\setminus\Omega)>\eps^{n}$. For small $\eps$, $d(K,\R\setminus\Omega)>\eps^n$ and thus $\restr{(u_\eps\conv\psi_{\eps^n})}{K}$ can be extended to a $\Cnt[\infty](\Omega)$-function. Independent of the extension, by the mean value theorem,
\begin{multline*}
p_{k,K}(u_\eps\conv\psi_{\eps^n} - u_\eps)
= \sup_{x\in K, \abs\alpha = k}\abs{(\partial^\alpha u_\eps) \conv \psi_{\eps^{n}}(x) - \partial^\alpha u_\eps(x)}\\
= \sup_{x\in K, \abs\alpha = k}\abs{\int_{\abs t\le \eps^n} (\partial^\alpha u_\eps(x-t) - \partial^\alpha u_\eps(x)) \psi_{\eps^{n}}(t)\,dt}
\le \eps^{n} p_{k+1,K+r}(u_\eps) \int_{\R^d}\abs{\psi}
\end{multline*}
for small $\eps$, where $r>0$ ($r\in\R$) such that $K+r = \{x\in\R^d: d(x,K)\le r\}\csub\Omega$.
\end{proof}

\begin{prop}\label{prop_dense}
Let $\mathcal A$ be the set of all $u=[(u_\eps)_\eps]\in\Gen_c(\Omega)$ for which
\[(\exists N\in\N) (\forall K\csub\Omega) (\forall k\in\N) (p_{k,K}(u_\eps)\le \eps^{-Nk-N},\text{ for small }\eps).\]
Then $\mathcal A$ is dense in $\Gen(\Omega)$.
\end{prop}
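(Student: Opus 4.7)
The plan is to combine two smoothing moves: first a cutoff to reduce to compactly supported elements of $\Gen_c(\Omega)$, and then the convolution approximation of proposition~\ref{prop_convoluted_repres}. The cutoff costs nothing in the sharp topology, because it preserves the values of $u$ on any prescribed compact, while the convolutions $\tilde u_\eps\conv\psi_{\eps^n}$ both converge to $u$ and, for each fixed $n$, satisfy the growth bound $\eps^{-Nk-N}$ defining $\mathcal A$.

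Concretely, a basic sharp neighbourhood of $u$ has the form $\{w:P_{k,K}(u-w)<\delta\}$, and finite intersections can be absorbed by enlarging $k$ and $K$. Given such a neighbourhood, I would pick $\chi\in\Cnt[\infty]_c(\Omega)$ with $\chi\equiv 1$ on an open neighbourhood of $K$, set $\tilde u:=\chi u\in\Gen_c(\Omega)$, and note that $P_{k,K}(u-\tilde u)=0$ since $u-\tilde u$ vanishes near $K$. One can then choose a representative $(\tilde u_\eps)_\eps$ supported in a fixed compact $K_0\csub\Omega$, and by moderateness there exists $M\in\N$ with $p_{0,K_0}(\tilde u_\eps)\le\eps^{-M}$ for small $\eps$. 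Applying proposition~\ref{prop_convoluted_repres} to $\tilde u$, for $n$ large enough the element $v^{(n)}:=[(\tilde u_\eps\conv\psi_{\eps^n})_\eps]$ lies in the neighbourhood of $\tilde u$, hence also of $u$ by the ultrametric property $P_{k,K}(u-v^{(n)})\le\max(P_{k,K}(u-\tilde u),P_{k,K}(\tilde u-v^{(n)}))$.

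It remains to verify $v^{(n)}\in\mathcal A$. Differentiating under the convolution and rescaling $\psi_{\eps^n}$ gives, for $\abs\alpha=k$, a bound $p_{k,K}(v^{(n)}_\eps)\le\eps^{-nk}\,p_{0,K_0}(\tilde u_\eps)\,C_k(\psi)\le\eps^{-nk-M}C_k(\psi)$, where $C_k(\psi):=\max_{\abs\alpha=k}\int\abs{\partial^\alpha\psi}$. The support of $v^{(n)}$ sits inside $K_0+\overline{B(0,\eps^n)}$, which for small $\eps$ is contained in a fixed compact of $\Omega$, so indeed $v^{(n)}\in\Gen_c(\Omega)$.

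The main subtlety is the uniformity-in-$k$ in the definition of $\mathcal A$: a single exponent $N$ must govern the moderateness bound of every derivative simultaneously. This is resolved by exploiting that the qualifier ``for small $\eps$'' in the definition is permitted to depend on $k$: for each fixed $k$ the constant $C_k(\psi)$ can be absorbed into $\eps^{-1}$ at a $k$-dependent threshold, yielding $p_{k,K}(v^{(n)}_\eps)\le\eps^{-nk-M-1}$ for small $\eps$, and then $N:=\max(n,M+1)$ is independent of both $K$ and $k$ as required. The exponent $n$ being fixed early in the argument (once the target neighbourhood is chosen) is what makes this uniformity possible.
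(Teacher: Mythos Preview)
Your argument is correct and follows the same strategy as the paper: cut off to land in $\Gen_c(\Omega)$, then mollify via proposition~\ref{prop_convoluted_repres}, and observe that putting all derivatives on the mollifier yields the uniform bound $\eps^{-nk-M-1}$; the paper simply presents these two steps in the reverse order (first $\mathcal A$ dense in $\Gen_c(\Omega)$, then $\Gen_c(\Omega)$ dense in $\Gen(\Omega)$). One small slip: finite intersections of basic neighbourhoods cannot be absorbed by enlarging $k$ alone, since $P_{k,K}$ does not control $P_{j,K}$ for $j<k$; but your argument works verbatim for the genuine base $\{w:\max_{j\le k}P_{j,K}(u-w)<\delta\}$, so this does not affect the proof.
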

\begin{proof}
Let $u\in\Gen_c(\Omega)$. Then there exists a representative $(u_\eps)_\eps$ of $u$ and $L\csub\Omega$ such that $\supp u_\eps \subseteq L$, for each $\eps$. For each $K\csub\Omega$ and $k\in\N$,
\begin{multline*}
p_{k,K}(u_\eps\conv\psi_{\eps^n})
=\sup_{x\in K,\abs\alpha=k} \abs{u_\eps\conv\partial^\alpha(\psi_{\eps^n})}\\
\le \eps^{-nk} \sup_{x\in L}\abs{u_\eps(x)} \max_{\abs\alpha = k}\int_{\R^d} 
\abs{\partial^\alpha \psi} \le \eps^{-nk-1} \sup_{x\in L}\abs{u_\eps(x)},
\end{multline*}
for small $\eps$. Thus $[(u_\eps\conv\psi_{\eps^n})_\eps]\in\mathcal A$. By proposition \ref{prop_convoluted_repres}, $\mathcal A$ is dense in $\Gen_c(\Omega)$. Further, $\Gen_c(\Omega)$ is dense in $\Gen(\Omega)$ (for $u\in\Gen(\Omega)$, $u=\lim_{n\to\infty}u\chi_n$, where $\chi_n\in\test(\Omega)$ with $\chi_n(x)=1$, $\forall x\in K_n$, where $(K_n)_{n\in\N}$ is a compact exhaustion of $\Omega$).\end{proof}

\begin{thm}
The subalgebra of sublinear generalized functions is dense in $\Gen(\Omega)$.
\end{thm}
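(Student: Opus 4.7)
The plan is to combine Proposition \ref{prop_dense} with the lemma characterizing sublinearity via the algebras $\GenLin{a}(K)$. All the heavy lifting has already been done in the preceding propositions, so the argument is essentially a verification.

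First, I would invoke Proposition \ref{prop_dense} to reduce the problem to showing that the set $\mathcal A$ defined there consists of sublinear generalized functions. So let $u=[(u_\eps)_\eps]\in\mathcal A$; then there exists $N\in\N$ (independent of $K$ and $k$) such that for each $K\csub\Omega$ and each $k\in\N$, $p_{k,K}(u_\eps)\le \eps^{-Nk-N}$ for small $\eps$. Equivalently, for each $\alpha\in\N^d$, $p_{\alpha,K}(u_\eps)\le \eps^{-N\abs\alpha - N}$ for small $\eps$.

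Next, I would match this against the definition of $\GenLin{a}(K)$ with $a:=N+1$: taking $a':=N<a$ and $b:=N$, we see immediately that $u\in\GenLin{N+1}(K)$ for every $K\csub\Omega$. By the lemma characterizing sublinear functions, this forces $u$ to be sublinear. (In fact, we get slightly more than sublinearity, since the same $a=N+1$ works for every $K$.)

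Finally, since $\mathcal A$ is contained in the algebra of sublinear generalized functions, and $\mathcal A$ is dense in $\Gen(\Omega)$ by Proposition \ref{prop_dense}, the sublinear generalized functions are a fortiori dense in $\Gen(\Omega)$. There is no real obstacle here; the only thing to check is that the uniform-in-$k$ growth bound in the definition of $\mathcal A$ indeed places each element in some $\GenLin{a}(K)$, which is immediate from unpacking the definitions.
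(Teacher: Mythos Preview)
Your proof is correct and follows exactly the paper's approach: show that the set $\mathcal A$ from Proposition~\ref{prop_dense} consists of sublinear generalized functions, and then invoke the density of $\mathcal A$. The paper's proof is a single line stating $\mathcal A\subseteq\{u\in\Gen(\Omega): u\text{ is sublinear}\}$; you simply spell out this containment via the lemma characterizing sublinearity through $\GenLin{a}(K)$, which is the natural way to verify it.
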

\begin{proof}
With the notations of proposition \ref{prop_dense}, $\mathcal A\subseteq \{u\in\Gen(\Omega): u$ is sublinear$\}$.
\end{proof}

\begin{ack}
We are grateful to D.\ Scarpal{\'e}zos for very useful discussions.
\end{ack}

\end{document}